\documentclass[paper=a4,english,fontsize=11pt,parskip=half,abstract=true]{scrartcl}
\usepackage{babel}
\usepackage[utf8]{inputenc}
\usepackage[T1]{fontenc}
\usepackage[left=20mm,right=20mm,top=30mm,bottom=30mm]{geometry}
\usepackage{amsmath}
\usepackage{amsthm}
\usepackage{amssymb}
\usepackage{enumerate} 
\usepackage{thmtools}
\usepackage{mathtools}
\mathtoolsset{centercolon} 
\usepackage[bookmarks=true,
            pdftitle={Groups with supersolvable automorphism group},
            pdfauthor={Benjamin Sambale},
            pdfkeywords={},
            pdfstartview={FitH}]{hyperref}

\newtheorem{Thm}{Theorem} 
\newtheorem{Lem}[Thm]{Lemma}

\newtheorem{Cor}[Thm]{Corollary}
\newtheorem{Pro}[Thm]{Problem}
\theoremstyle{definition}
\newtheorem{Def}[Thm]{Definition}

\numberwithin{equation}{section}

\setcounter{MaxMatrixCols}{25}
\allowdisplaybreaks[1]

\renewcommand{\phi}{\varphi}
\newcommand{\C}{\mathrm{C}}

\newcommand{\Z}{\mathrm{Z}}
\newcommand{\pcore}{\mathrm{O}}

\newcommand{\ZZ}{\mathbb{Z}}

\newcommand{\NN}{\mathbb{N}}
\newcommand{\FF}{\mathbb{F}}
\newcommand{\Aut}{\mathrm{Aut}}
\newcommand{\Inn}{\mathrm{Inn}}

\newcommand{\GL}{\operatorname{GL}}

\title{Groups with supersolvable\\ automorphism group}
\author{Benjamin Sambale\footnote{Institut für Algebra, Zahlentheorie und Diskrete Mathematik, Leibniz Universität Hannover, Welfengarten 1, 30167 Hannover, Germany,
\href{mailto:sambale@math.uni-hannover.de}{sambale@math.uni-hannover.de}}}
\date{\today}

\begin{document}
\frenchspacing
\maketitle
\begin{abstract}\noindent
We call a finite group $G$ ultrasolvable if it has a characteristic subgroup series whose factors are cyclic. It was shown by Durbin--McDonald that the automorphism group of an ultrasolvable group is supersolvable. The converse statement was established by Baartmans--Woeppel under the hypothesis that $G$ has no direct factor isomorphic to the Klein four-group. We extend this result by proving that $\Aut(G)$ is supersolvable if and only if $G$ is ultrasolvable or $G=H\times C_2\times C_2$ where $H$ is ultrasolvable of odd order. This corrects an erroneous claim by Corsi Tani. Our proof is more elementary than Baartmans--Woeppel's and uses some ideas of Corsi Tani and Laue.
\end{abstract}

\textbf{Keywords:} automorphism groups, supersolvable groups, characteristic series\\
\textbf{AMS classification:} 20D45, 20F16, 20F22, 20F28 

\section{Introduction}

A finite group $G$ is solvable if and only if it has a subnormal series 
\[1=G_0\unlhd G_1\unlhd\ldots\unlhd G_n=G\]
such that the factors $G_i/G_{i-1}$ are cyclic for $i=1,\ldots,n$. Moreover, $G$ is called supersolvable if there is a analogous \emph{normal} series (i.\,e. $G_i\unlhd G$ for $i=1,\ldots,n$) with cyclic factors. It is therefore natural to investigate groups $G$ with a characteristic series (i.\,e. $\alpha(G_i)=G_i$ for all $\alpha\in\Aut(G)$ and $i=1,\ldots,n$) with cyclic factors. 
Durbin--McDonald~\cite{DurbinMcDonald} have called these groups c.c.s. groups (\emph{characteristic cyclic series}), but we like to call them \emph{ultrasolvable}. It was shown in \cite[Theorem~1]{DurbinMcDonald} that the automorphism group of an ultrasolvable group is supersolvable. We provide an elementary proof in the next section for the convenience of the reader (\autoref{lemcharseries}).

The characteristically simple Klein four-group $V\cong C_2\times C_2$ with supersolvable automorphism group isomorphic to the symmetric group $S_3$ shows that the converse does not hold in general. Nevertheless, Durbin--McDonald have conjectured that the converse holds under the hypothesis that $G$ has no direct factor isomorphic to $V$. This was eventually proven by Baartmans--Woeppel~\cite[Theorem~C]{BaartmansWoeppel} relying on deep theorems of Baer~\cite{Baer}. Corsi Tani~\cite[Theorema~3]{CorsiTani} showed that a $p$-group $P\not\cong V$ is ultrasolvable if and only if $\Aut(P)$ is supersolvable. (A more precise description of the automorphism group in this case has been obtained by Lakatos~\cite{Lakatos}.)
It is claim in \cite[footnote (**) on p.~106]{CorsiTani} (and in its MathSciNet review \href{https://mathscinet.ams.org/mathscinet/relay-station?mr=670814}{MR0670814}) that the same result holds for arbitrary finite groups not isomorphic of $V$. But this is plainly false with $G=C_6\times C_2$ being a counterexample ($\Aut(G)\cong S_3\times C_2$). 

The aim of this paper is the following characterization of groups with supersolvable automorphism group.

\begin{Thm}\label{main}
For every finite group $G$, the following statements are equivalent:
\begin{enumerate}[(1)]
\item $\Aut(G)$ is supersolvable.
\item $G$ is ultrasolvable or $G\cong H\times C_2\times C_2$ where $H$ is ultrasolvable of odd order.
\end{enumerate}
\end{Thm}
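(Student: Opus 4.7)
For the direction $(2) \Rightarrow (1)$, I would treat the two cases separately: if $G$ is ultrasolvable then \autoref{lemcharseries} applies directly, and if $G \cong H \times V$ with $H$ ultrasolvable of odd order then the coprimality of $|H|$ and $|V| = 4$ gives $\Aut(G) \cong \Aut(H) \times \Aut(V) \cong \Aut(H) \times S_3$. Both factors are supersolvable (the first by \autoref{lemcharseries} applied to $H$, the second since $|S_3| = 6$), and a direct product of supersolvable groups is supersolvable.

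For the converse, set $A := \Aut(G)$ and assume $A$ is supersolvable. Since $\Inn(G) \cong G/\Z(G)$ is a supersolvable subgroup of $A$, the group $G$ is solvable. I would proceed by induction on $|G|$, picking a minimal characteristic subgroup $M$; by solvability $M$ is elementary abelian of order $p^n$, and $A$ acts irreducibly on $M$. The key dichotomy is whether $M \leq \Z(G)$. If $M \not\leq \Z(G)$, then $M \cap \Z(G)$ is a proper characteristic subgroup of $M$, hence trivial by minimality, so $M$ embeds into $\Inn(G) \leq A$ as a minimal normal subgroup of $A$; supersolvability then forces $|M| = p$, so $M$ is cyclic of prime order.

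The harder case is $M \leq \Z(G)$. Here I would introduce the normal subgroup of central automorphisms
\[
T := \bigl\{g \mapsto g\phi(g) : \phi \in \Hom(G/M[G,G], M)\bigr\} \unlhd A,
\]
which, as an $A$-module, is isomorphic to $U^* \otimes_{\FF_p} M$ where $U = (G^{\mathrm{ab}}/\overline{M}) \otimes \FF_p$. Since $T$ is normal in the supersolvable $A$, every $A$-composition factor of $T$ has $\FF_p$-dimension one; combined with the irreducibility of $M$ of $\FF_p$-dimension $n$, this forces either $n = 1$ or $U = 0$. In the latter case I expect the constraint $U = 0$, together with Corsi Tani's theorem applied to the characteristic Sylow $p$-subgroup, to yield $p = 2$, $M = V$, and that $V$ is a central Sylow $2$-subgroup of $G$; then Schur--Zassenhaus provides an odd-order complement $H$ with $G = H \times V$. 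In the cyclic case I would continue the induction on $G/M$ via a slightly strengthened hypothesis allowing $A$ to be any supersolvable subgroup of $\Aut(G)$ containing $\Inn(G)$, since the image of $A$ in $\Aut(G/M)$ is automatically supersolvable and still contains $\Inn(G/M)$. The main obstacle is precisely the central case with $n \geq 2$: ruling out exotic irreducible supersolvable subgroups of $\GL_n(p)$ for odd $p$ will require fully exploiting the tensor-product structure of $T$ alongside the Sylow-theoretic arguments of Corsi Tani and Laue, and producing the explicit direct splitting $G = H \times V$ rather than merely a central Sylow $V$.
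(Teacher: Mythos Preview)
Your implication ``$T\cong U^*\otimes_{\FF_p}M$ has only one-dimensional $A$-composition factors and $M$ is irreducible of dimension $n$, hence $n=1$ or $U=0$'' is the heart of your strategy, and it fails already as a statement about modules over supersolvable groups. Take $A=C_3^{\,4}\rtimes Q_8$, where $Q_8$ acts on $T:=C_3^{\,4}\cong M_2(\FF_3)$ by conjugation through its embedding $Q_8\hookrightarrow\SL_2(\FF_3)$. This action factors through $Q_8/\Z(Q_8)\cong C_2\times C_2$, which is abelian of order prime to $3$, so $T$ decomposes into one-dimensional $A$-modules and $A$ is supersolvable. Let $M=\FF_3^{\,2}$ be the natural $Q_8$-module (inflated to $A$); it is irreducible of dimension $2$ and self-dual, and $T\cong M^*\otimes M\cong U^*\otimes M$ with $U=M\ne 0$. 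So nothing in the abstract hypotheses forces $n=1$. You therefore need genuinely new group-theoretic input tying $M$, $U$ and $T$ back to $G$ and to $A=\Aut(G)$ --- and your proposal does not supply it. The downstream ``$U=0$'' analysis and the strengthened induction are likewise only sketched: you have not explained why $U=0$ forces $p=2$ and $M\cong V$, nor how the two alternatives in the conclusion interact when you pass to $G/M$.

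The paper avoids this obstacle by arguing from the top rather than the bottom. It first disposes of the case $G=H\times V$ by an explicit commutator computation inside $\Aut(G)$ forcing $|H|$ odd. When $V$ is not a direct factor it runs down the Sylow tower of the supersolvable group $G$: for each prime $p$ it must show that the image $\overline{A}$ of $A$ in $\Aut(\overline{P})$ (with $\overline{P}$ the relevant Sylow quotient) is strictly $p$-closed, which by \autoref{thmpgrpiff} is equivalent to $\overline{P}$ being $A$-solvable. The only obstruction is a certain subgroup $\overline{C}$, and the argument shows that $[\overline{P},\overline{C}]$ lifts to a \emph{characteristic direct factor} $P_0$ of $G$; by Corsi Tani's theorem for $p$-groups and the standing hypothesis that $V$ is not a direct factor, $P_0$ is ultrasolvable and induction finishes the proof. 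In short, the paper trades your delicate tensor-product representation theory for the coarser but robust ``produce a characteristic direct factor'' mechanism, and that is what makes the argument close.
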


The proof is mostly self-contained and makes use of ideas of Corsi Tani and Laue~\cite{Laue}. 

\section{\texorpdfstring{$A$}{A}-solvable groups}

Our notation is standard and follows Kurzweil--Stellmacher's book~\cite{Kurzweil}. From now on, $G$ will always be a finite group.

\begin{Def}
A group $A$ acts on $G$ via a homomorphism $A\to\Aut(G)$ (in most cases we consider $A\le\Aut(G)$ with the embedding homomorphism). A subnormal series $1=G_0\unlhd G_1\unlhd\ldots\unlhd G_n=G$ is called an $A$-\emph{series} if each $G_i$ is $A$-invariant, i.\,e. $\alpha(G_i)=G_i$ for all $\alpha\in A$. 
If $G$ has an $A$-series with cyclic factors $G_i/G_{i-1}$ for $i=1,\ldots,n$, then $G$ is called $A$-\emph{solvable}. 
\end{Def}

Note that $G$ is $1$-solvable, $\Inn(G)$-solvable or $\Aut(G)$-solvable if and only if $G$ is solvable, supersolvable or ultrasolvable respectively ($\Inn(G)$ denotes the inner automorphism group of $G$). 
An $A$-series of $G$ with cyclic factors can be refined to an $A$-series with factors of prime order, because subgroups of cyclic groups are characteristic. This will often be used in the following. In the usual manner, one verifies that $A$-invariant subgroups of $A$-solvable groups are $A$-solvable. The same holds for quotients by $A$-invariant normal subgroups.

We first prove \cite[Theorem~1]{DurbinMcDonald} with a simpler argument.

\begin{Thm}[\textsc{Durbin--McDonald}]\label{lemcharseries}
If $G$ is ultrasolvable, then $\Aut(G)$ is supersolvable.
\end{Thm}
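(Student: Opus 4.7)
My plan is to proceed by induction on $|G|$; the base case $G=1$ is vacuous. Using that every subgroup of a cyclic group is characteristic in it, I first refine the given characteristic series so that each factor has prime order: $1=G_0\unlhd G_1\unlhd\ldots\unlhd G_n=G$ with $|G_i/G_{i-1}|=p_i$ prime. Let $N:=G_1$, a characteristic cyclic subgroup of prime order $p:=p_1$. Then $G/N$ inherits the characteristic cyclic series $1\unlhd G_2/N\unlhd\ldots\unlhd G/N$, so $\Aut(G/N)$ is supersolvable by the induction hypothesis.

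Both $N$ and $G/N$ are $\Aut(G)$-invariant, so the assignment $\rho\colon\Aut(G)\to\Aut(G/N)\times\Aut(N)$, $\alpha\mapsto(\bar\alpha,\alpha|_N)$, is a well-defined homomorphism. Its image lies in the direct product of the supersolvable group $\Aut(G/N)$ and the cyclic group $\Aut(N)\cong C_{p-1}$, hence is itself supersolvable. A normal series of $\rho(\Aut(G))$ with cyclic factors pulls back to a normal chain from $B:=\ker\rho$ to $\Aut(G)$ with cyclic factors, reducing the problem to equipping $B$ with an $\Aut(G)$-invariant filtration with cyclic factors.

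The standard cocycle correspondence $\alpha\mapsto(g\mapsto\alpha(g)g^{-1})$ identifies $B$ with $Z^1(G/N,N)$; since $N\cong\FF_p$, this is an elementary abelian $p$-group. For $1\le i\le n$ set $W_i:=\{c\in B:c|_{G_i/N}=0\}$, so $B=W_1\supseteq W_2\supseteq\ldots\supseteq W_n=0$. Each $W_i$ is $\Aut(G)$-invariant because the $G_i$ are characteristic in $G$. Choosing a lift $t\in G_{i+1}$ of a generator of the cyclic quotient $G_{i+1}/G_i$, the evaluation $c\mapsto c(tN)$ embeds $W_i/W_{i+1}$ into $N$, with image contained in the kernel of the norm operator $\sum_{k=0}^{p_{i+1}-1}t^k$ acting on $N\cong\FF_p$. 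Because $N$ is one-dimensional, each factor $W_i/W_{i+1}$ is trivial or cyclic of order $p$, and concatenating with the pullback series produces the required normal series for $\Aut(G)$.

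The principal technical obstacle is verifying the $\Aut(G)$-equivariance of the evaluation map $c\mapsto c(tN)$. The action of $\beta\in\Aut(G)$ on $B$ is $(\beta\cdot c)(g)=\beta|_N(c(\bar\beta^{-1}(g)))$, and one has to show this descends to scalar multiplication on the image $W_i/W_{i+1}\hookrightarrow\FF_p$. This follows from the cocycle relation together with the observation that both $\beta|_N$ and the conjugation action of $G$ on $N$ are given by scalars in $\FF_p^\times$, so their combined effect on $c(tN)$ is again multiplication by an element of $\FF_p^\times$.
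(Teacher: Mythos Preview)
Your argument works and is a genuine alternative to the paper's. The paper runs a \emph{double} induction: with $B,C\unlhd A:=\Aut(G)$ the kernels of the actions on $G_{n-1}$ and on $G/G_1$, both $A/B$ and $A/C$ are supersolvable by induction, hence so is $A/(B\cap C)$; the residual kernel $D=B\cap C$ consists of automorphisms fixing $G_{n-1}$ pointwise and acting trivially on $G/G_1$, so each $\alpha\in D$ is determined by the single exponent $a$ in $\alpha(x)=xy^a$, and a short Euclidean-division argument shows $D$ is cyclic. You instead strip off only the bottom term $N=G_1$, identify the (larger) kernel $B$ with $Z^1(G/N,N)$, and filter it along the characteristic series. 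The paper's route is shorter and avoids cocycle language entirely; yours is more structural and makes the abelian nature of the kernel transparent.

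Two remarks. First, $G/N$ need not inherit a \emph{characteristic} cyclic series: for $G=D_8$ and $N=\Z(G)$ one gets $G/N\cong C_2\times C_2$, which is not ultrasolvable, so your sentence ``$\Aut(G/N)$ is supersolvable by the induction hypothesis'' is not literally justified. What you actually need, and have, is that the \emph{image} of $\Aut(G)$ in $\Aut(G/N)$ preserves the series $G_i/N$; phrasing the induction as ``if $H$ is $A$-solvable for $A\le\Aut(H)$ then $A$ is supersolvable'' fixes this (and the paper's own induction implicitly uses the same strengthening). Second, your final paragraph is unnecessary. Once each $W_i$ is $\Aut(G)$-invariant---which you already deduced from the $G_i$ being characteristic---and $W_i/W_{i+1}$ embeds as an abelian group into $N\cong\FF_p$, the normal series for $\Aut(G)$ is complete; supersolvability asks only for normal terms with cyclic quotients, not for any equivariance of the evaluation map.
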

\begin{proof}
Let $A:=\Aut(G)$. 
Let $1=G_0\unlhd\ldots\unlhd G_n=G$ be an $A$-series with cyclic factors. 
Let $B\unlhd A$ be the kernel of the action of $A$ on $G_{n-1}$. Let $C\unlhd A$ be the kernel of the action of $A$ on $G/G_1$. By induction on $|G|$, we may assume that $A/B\le\Aut(G_{n-1})$ and $A/C\le\Aut(G/G_1)$ are supersolvable. Then $A/(B\cap C)\le A/B\times A/C$ is supersolvable. It suffices to show that $D:=B\cap C$ is cyclic. Let $G/G_{n-1}=\langle x\rangle G_{n-1}$ and $G_1=\langle y\rangle$. We choose $\alpha\in D\setminus\{1\}$ with $\alpha(x)=xy^a$ such that $a\ge 0$ is as small as possible. 
Since $\alpha$ is uniquely determined by $\alpha(x)$, we must have $a\ge 1$. Let $\beta\in D$ be arbitrary with $\beta(x)=xy^b$. By Euclidean division, there exist $q,r\in\ZZ$ such that $b=qa+r$ and $0\le r<a$. Since
\[(\beta^{-1}\alpha^q)(x)=\beta^{-1}(x)y^{qa}=xy^{qa-b}=xy^r,\]
it follows that $r=0$ and $\beta=\alpha^q$. This shows that $D=\langle\alpha\rangle$. 
\end{proof}

The next lemma is well-known, but we include a proof for sake of completeness. As usual, $\pcore_{\pi}(G)$ denotes the largest normal $\pi$-subgroup of $G$. 

\begin{Lem}[Sylow tower property]\label{lempi}
Let $G$ be supersolvable and $n\in\NN$. Let $\pi$ be the set of primes $p\ge n$. Then $G$ has a normal Hall $\pi$-subgroup. 
In particular, $G$ is $2$-nilpotent, i.\,e. $G$ has a normal $2$-complement.
\end{Lem}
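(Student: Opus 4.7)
The plan is to proceed by strong induction on $|G|$, the base case $|G|=1$ being immediate. The heart of the proof is a Sylow tower step that I would prove first: if $p$ denotes the largest prime dividing $|G|$, then $G$ has a normal Sylow $p$-subgroup $P$. This is itself the lemma specialized to $n=p$, and gets absorbed into the same induction. Once the Sylow tower step is in hand, a general Hall $\pi$-subgroup is produced by lifting from the supersolvable quotient $G/P$ (or taken trivial if $p<n$).

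For the Sylow tower step, I would choose a minimal normal subgroup $N\unlhd G$; since $G$ is supersolvable, every chief factor is cyclic of prime order, so $|N|=q$ for some prime $q\le p$. If $q=p$, induction applied to $G/N$ gives $P/N\unlhd G/N$, hence $P\unlhd G$. If $q<p$, induction yields $PN/N\unlhd G/N$, so $PN\unlhd G$. The conjugation action of $P$ on the cyclic group $N$ induces a homomorphism $P\to\Aut(N)$ whose image order divides $\gcd(|P|,q-1)$; since $q-1<p$, this action is trivial. Thus $PN=P\times N$, so $P$ is the unique Sylow $p$-subgroup of $PN$ and hence characteristic in $PN\unlhd G$, giving $P\unlhd G$.

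To conclude the main claim: if $p<n$ then no prime dividing $|G|$ lies in $\pi$ and $\{1\}$ is a normal Hall $\pi$-subgroup. Otherwise $p\ge n$, the Sylow tower step provides $P\unlhd G$ with $P$ a $\pi$-subgroup, and the inductive hypothesis applied to the smaller supersolvable group $G/P$ yields a normal Hall $\pi$-subgroup $H/P\unlhd G/P$. Then $H\unlhd G$ is a $\pi$-subgroup (extension of a $\pi$-group by a $\pi$-group) and $[G:H]=[G/P:H/P]$ is a $\pi'$-number, so $H$ is the desired normal Hall $\pi$-subgroup of $G$. The ``in particular'' clause on $2$-nilpotency is the case $n=3$.

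The point I expect to be the main obstacle is the $q<p$ case of the Sylow tower argument: the maximality of $p$ is used precisely through the strict inequality $q-1<p$ to force the coprime action of $P$ on $N$ to be trivial; without this (i.e.\ if $p$ were not maximal) the Sylow $p$-subgroup need not be normal. The remaining steps are routine manipulations with normal quotients and Hall/Sylow indices.
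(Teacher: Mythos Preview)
Your argument is correct. Both proofs use induction on $|G|$, the fact that minimal normal subgroups in a supersolvable group have prime order, and the observation that $|\Aut(C_q)|=q-1$ forces a trivial action when the acting group only involves primes larger than $q$. The organisation differs, however. The paper runs a single induction directly on the Hall-$\pi$ statement: it picks a minimal normal $N$ of prime order $q$, lifts the normal Hall $\pi$-subgroup $K/N$ from $G/N$, and in the case $q<n$ splits off $N$ inside $K$ via Schur--Zassenhaus to obtain $K=N\times L$ with $L=\pcore_\pi(K)$ characteristic in $K\unlhd G$. You instead isolate the Sylow-tower step (normality of the Sylow subgroup for the largest prime) as an intermediate lemma and then peel off one prime at a time by passing to $G/P$. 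Your route is slightly longer but more modular, and it sidesteps any appeal to Schur--Zassenhaus, since in your key case $PN$ already has $P\cap N=1$ for order reasons; the paper's route is more compact, handling all of $\pi$ in one shot at the cost of invoking the (abelian) Schur--Zassenhaus theorem.
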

\begin{proof}
We argue by induction on $|G|$. Let $N\unlhd G$ be a minimal normal subgroup. Then $N$ is a chief factor and therefore $q:=|N|$ is a prime. By induction $G/N$ has a normal Hall $\pi$-subgroup $K/N\le G/N$. If $q\ge n$, then $K$ is a normal Hall $\pi$-subgroup of $G$. Thus, let $q<n$. Then $N$ has a complement $L$ in $K$ by the Schur--Zassenhaus theorem. Since $|\Aut(N)|=q-1$ has no prime divisor in $\pi$, we obtain $N\le\Z(K)$ and $K=N\times L$. It follows that $L=\pcore_\pi(K)=\pcore_\pi(G)$ is a normal Hall $\pi$-subgroup of $G$. 
The second claim follows with $n=3$. 
\end{proof}

The following result generalizes \cite[Lemma~1]{CorsiTani} and is related to the uniqueness in the Krull--Remak--Schmidt theorem (see \cite[I.12.6]{Huppert}). 

\begin{Lem}\label{lemchardir}
Let $G=A\times B$ with $\gcd(|A/A'|,|\Z(B)|)\ne 1$. Then $A$ is not characteristic in $G$.
\end{Lem}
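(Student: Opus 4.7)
The plan is to construct an explicit automorphism of $G = A \times B$ that does not preserve the direct factor $A$. The hypothesis $\gcd(|A/A'|, |\Z(B)|) \ne 1$ supplies a prime $p$ dividing both numbers, and this is the whole reason such an automorphism can be built.

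First I would fix a prime $p$ dividing $\gcd(|A/A'|, |\Z(B)|)$. Since $A/A'$ is abelian with order divisible by $p$, it has a quotient of order $p$, giving an epimorphism $A \twoheadrightarrow A/A' \twoheadrightarrow C_p$. Since $\Z(B)$ is abelian with order divisible by $p$, it contains a subgroup of order $p$. Composing, I obtain a homomorphism $f \colon A \to \Z(B)$ whose image has order $p$; in particular $f$ is nontrivial.

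Next I would define $\phi \colon G \to G$ by $\phi(a,b) \mathrel{\mathop:}= (a, f(a)b)$ for $a \in A$, $b \in B$, and check that $\phi \in \Aut(G)$. Multiplicativity reduces to the identity $b_1 f(a_2) = f(a_2) b_1$, which holds because $f(a_2) \in \Z(B)$. The map $(a,b) \mapsto (a, f(a)^{-1} b)$ is then a two-sided inverse, so $\phi$ is an automorphism. Finally, because $f$ is nontrivial, there exists $a \in A$ with $f(a) \ne 1$, and then $\phi(a,1) = (a,f(a)) \notin A$, so $\phi(A) \ne A$ and $A$ is not characteristic in $G$.

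I expect no serious obstacle: the construction is entirely driven by the standard trick of twisting one factor by a homomorphism from the other (the automorphisms of $A \times B$ arising from $\Hom(A, \Z(B))$). The only point that requires a moment of care is the centrality condition $f(A) \subseteq \Z(B)$, which is exactly what the hypothesis $p \mid |\Z(B)|$ (rather than just $p \mid |B|$) guarantees, and which is needed both for $\phi$ to be a homomorphism and for it to be well-defined on the internal direct product.
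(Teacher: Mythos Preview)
Your proof is correct and follows exactly the paper's approach: construct a nontrivial homomorphism $f\colon A\to\Z(B)$ (factoring through $A/A'$) and use it to twist the second coordinate via $(a,b)\mapsto(a,f(a)b)$. You have simply filled in the details that the paper leaves to the reader, including the explicit reason (via a common prime $p$) that such an $f$ exists and the verification that the twist is an automorphism not fixing $A$.
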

\begin{proof}
By hypothesis, there exists a non-trivial homomorphism $\phi:A\to A/A'\to\Z(B)$. It is easy to check that $\psi:G\to G$, $(a,b)\mapsto(a,\phi(a)b)$ is an automorphism with $\psi(A)\ne A$. 
\end{proof}

\begin{Cor}\label{corchardir}
Let $P=A\times B$ be a $p$-group such that $A$ is characteristic in $P$. Then $A=1$ or $B=1$.
\end{Cor}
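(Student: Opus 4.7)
The plan is to reduce immediately to \autoref{lemchardir}. Assume for contradiction that $A \ne 1$ and $B \ne 1$, and aim to show $\gcd(|A/A'|,|\Z(B)|) \ne 1$ so that $A$ cannot be characteristic in $P$.

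For this, I would invoke two standard facts about non-trivial $p$-groups. First, since $B$ is a non-trivial $p$-group, its center $\Z(B)$ is non-trivial, so $p$ divides $|\Z(B)|$. Second, since $A$ is a non-trivial $p$-group, it is nilpotent and therefore its commutator subgroup satisfies $A' < A$, so $p$ divides $|A/A'|$. Together these give $p \mid \gcd(|A/A'|,|\Z(B)|)$, in particular the gcd is not $1$.

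Now \autoref{lemchardir} applies to the decomposition $P = A \times B$ and produces an automorphism $\psi$ of $P$ with $\psi(A) \ne A$, contradicting the assumption that $A$ is characteristic. Hence $A = 1$ or $B = 1$. There is no real obstacle here; the only thing to be careful about is making sure that the two ingredients (non-trivial center of a $p$-group, and proper commutator subgroup of a non-trivial nilpotent group) are stated cleanly enough to be quoted, both of which are entirely standard.
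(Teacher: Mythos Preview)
Your argument is correct and is precisely the intended one: the paper states this as an immediate corollary of \autoref{lemchardir} without further proof, and the two facts you invoke ($p\mid|\Z(B)|$ and $p\mid|A/A'|$ for non-trivial $p$-groups $A,B$) are exactly what makes the lemma applicable.
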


Now we prove \cite[Lemma~2]{CorsiTani}, which is a consequence of Laue~\cite[Satz~3]{Laue}. 

\begin{Lem}\label{lempcore}
For every non-abelian $p$-group $P$, we have $\pcore_{p'}(\Aut(P))=1$.
\end{Lem}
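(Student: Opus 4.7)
The plan is as follows. Set $A := \pcore_{p'}(\Aut(P))$. First, since both $A$ and $\Inn(P)$ are normal in $\Aut(P)$ and of coprime orders (a $p'$-group and a $p$-group, respectively), we have $A \cap \Inn(P) = 1$ and hence $[A, \Inn(P)] \le A \cap \Inn(P) = 1$. So every $\alpha \in A$ centralizes every inner automorphism: writing $\iota_g$ for conjugation by $g \in P$, the identity $\iota_{\alpha(g)} = \alpha \iota_g \alpha^{-1} = \iota_g$ forces $\alpha(g) g^{-1} \in \Z(P)$. Equivalently, $A$ acts trivially on $P/\Z(P)$ and $[P, A] \le \Z(P)$; in particular, $[P, A]$ is abelian.

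Next I would invoke standard coprime-action theory for $A$ on $P$: it yields the factorization $P = \C_P(A) \cdot [P, A]$, and from $[\C_P(A), A] = 1$ the further identity $[P, A] = [\C_P(A) \cdot [P, A], A] = [[P, A], A]$ follows (the cross terms vanish since elements of $\C_P(A)$ are fixed by $A$). Applying the coprime-action decomposition to the abelian $A$-module $[P, A]$ therefore gives $[P, A] = \C_{[P, A]}(A) \oplus [[P, A], A] = \C_{[P, A]}(A) \oplus [P, A]$, forcing $\C_{[P, A]}(A) = 1$, i.\,e.\ $\C_P(A) \cap [P, A] = 1$. Combined with the centrality of $[P, A]$ in $P$, this promotes the product to an internal direct product $P = \C_P(A) \times [P, A]$.

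The finishing blow is that both factors are characteristic in $P$: by normality of $A$ in $\Aut(P)$, $\beta A \beta^{-1} = A$ for every $\beta \in \Aut(P)$, and then $\beta(\C_P(A)) = \C_P(A)$ and $\beta([P, A]) = [\beta(P), \beta A \beta^{-1}] = [P, A]$ follow formally. Now \autoref{corchardir} forces one of the two direct factors to be trivial. If $[P, A] = 1$, then $A$ acts trivially on $P$ and hence $A = 1$. If $\C_P(A) = 1$, then $P = [P, A] \le \Z(P)$ is abelian, contradicting the hypothesis.

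The main obstacle, in my view, is recognizing that the coprime decomposition $P = \C_P(A) \cdot [P, A]$ here becomes a direct product of \emph{characteristic} subgroups; once that is in place, \autoref{corchardir} disposes of the lemma at once, and the non-abelianness assumption enters only to rule out the degenerate case $\C_P(A) = 1$. Without this observation one might be tempted to chase $A$'s action across a long characteristic series of $P$ (say, via $\Z(P)$, $P'$, and $\Phi(P)$), which appears considerably more tangled.
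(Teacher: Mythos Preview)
Your argument is correct and follows essentially the same route as the paper: both deduce $[P,A]\le\Z(P)$ from $A$ centralizing $\Inn(P)$, use coprime-action theory to obtain the direct decomposition $P=\C_P(A)\times[P,A]$ into characteristic subgroups, and then finish with \autoref{corchardir}. The only cosmetic difference is that the paper applies the Fitting-type splitting to $\Z(P)$ rather than to $[P,A]$, and it invokes $[P,A]=[P,A,A]$ as part of the cited coprime-action package rather than deriving it from $P=\C_P(A)[P,A]$ as you do.
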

\begin{proof}
Let $A:=\pcore_{p'}(\Aut(P))$. Then the normal $p$-subgroup $P/\Z(P)\cong\Inn(P)\unlhd\Aut(P)$ is centralized by $A$. It follow that $[P,A]\le\Z(P)<P$, since $P$ is non-abelian.
From the theory of coprime actions (see \cite[8.2.7]{Kurzweil}), we obtain $P=[P,A]\C_P(A)$ and $[P,A]=[P,A,A]\le[\Z(P),A]$. Since $\Z(P)$ is abelian, we further know that
\[[P,A]\cap\C_P(A)=[\Z(P),A]\cap\C_{\Z(P)}(A)=1\]
by \cite[8.4.2]{Kurzweil}.
Therefore, $P=[P,A]\times\C_P(A)$ is a decomposition into characteristic subgroups, because $A\unlhd\Aut(P)$. \autoref{corchardir} implies $P=\C_P(A)$ and $A=1$.
\end{proof}

\section{Strictly \texorpdfstring{$p$}{p}-closed groups}

The following definition goes back to Baer~\cite{Baer}.

\begin{Def}
Let $p$ be a prime. Then $G$ is called \emph{strictly $p$-closed} if $G$ has a normal Sylow $p$-subgroup $P$ such that $G/P$ is abelian of exponent dividing $p-1$, i.\,e. $x^{p-1}\in P$ for all $x\in G$. 
\end{Def}

It is a routine exercise to show that subgroups and quotients of strictly $p$-closed groups are strictly $p$-closed. 
Moreover, $G$ is strictly $p$-closed if and only if $G/\pcore_p(G)$ is strictly $p$-closed (see also \autoref{lemstrp2} below).
In particular, every $p$-group is strictly $p$-closed, and for $2$-groups the converse is also true.

\begin{Lem}\label{lemschur}
Let $G$ be strictly $p$-closed. If $G$ acts irreducibly on an elementary abelian $p$-group $V$, then $|V|=p$.
\end{Lem}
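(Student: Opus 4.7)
My plan is to decompose the action of $G$ on $V$ via the normal Sylow $p$-subgroup $P\unlhd G$ and the abelian quotient $\overline{G}:=G/P$ of exponent dividing $p-1$.

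First I would show that $P$ acts trivially on $V$. Since $P$ and $V$ are both $p$-groups, every $P$-orbit on $V$ has $p$-power length, so $|\C_V(P)|\equiv|V|\pmod p$; as $|V|$ is a non-trivial $p$-power this forces $p\mid|\C_V(P)|$ and in particular $\C_V(P)\ne 0$. Because $P\unlhd G$, the subgroup $\C_V(P)$ is $G$-invariant, and irreducibility yields $\C_V(P)=V$.

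Hence the action factors through $\overline{G}$. For every $g\in\overline{G}$ the relation $g^{p-1}=1$ holds on $V$, so the minimal polynomial of $g$ divides $x^{p-1}-1=\prod_{\lambda\in\FF_p^\times}(x-\lambda)$, which splits into distinct linear factors over $\FF_p$. Thus $g$ is diagonalizable over the prime field $\FF_p$. Since $\overline{G}$ is abelian, each eigenspace $V_\lambda$ of $g$ is $\overline{G}$-invariant, and irreducibility gives $V_\lambda=V$; so $g$ acts on $V$ as the scalar $\lambda\in\FF_p$. Consequently every element of $\overline{G}$ is a scalar, every $\FF_p$-line of $V$ is $\overline{G}$-invariant, and irreducibility forces $\dim_{\FF_p}V=1$, i.e.\ $|V|=p$.

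I do not expect a serious obstacle: both steps are short and standard. The crucial point, handed to us by the hypothesis, is that an abelian group of exponent dividing $p-1$ acting on an $\FF_p$-module diagonalizes \emph{over the prime field}, which is exactly what collapses the irreducible dimension to one.
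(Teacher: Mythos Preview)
Your proof is correct and follows the same overall strategy as the paper: first use orbit counting and normality of $P$ to show $\C_V(P)=V$, so the action factors through the abelian quotient $G/P$ of exponent dividing $p-1$; then exploit that the minimal polynomial of each element divides $X^{p-1}-1$, which splits over $\FF_p$.

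The only difference lies in how the second step is finished. The paper invokes Schur's lemma to conclude that the (faithful) image of $G/P$ in $\GL(V)$ is cyclic, and then finds an eigenvector in $\FF_p$ for a single generator. You instead argue directly, without Schur's lemma, that each $g$ is diagonalizable over $\FF_p$ and that abelianness forces every eigenspace to be $G$-invariant, so each $g$ is already a scalar. Your route is slightly more elementary in that it avoids citing Schur's lemma and the fact that finite subgroups of a field are cyclic; the paper's route yields the additional structural information that the image of $G$ is cyclic. Both reach $|V|=p$ equally cleanly.
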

\begin{proof}
We may assume that $G$ acts faithfully on $V$. 
Let $P$ be the unique Sylow $p$-subgroup of $G$. 
By orbit counting, we have $U:=\C_V(P)\ne 1$. Since $P\unlhd G$, $U$ is $G$-invariant. Since $V$ is irreducible, $U=V$ and $P=1$. Hence, $G$ is abelian of exponent dividing $p-1$. By Schur's lemma, $G$ is cyclic, say $G=\langle x\rangle$ (see \cite[8.3.3 or 8.6.1]{Kurzweil}). Let $f\in\GL(V)$ be the linear map induced by $x$. Since $x^{p-1}=1$, the minimal polynomial of $f$ divides $X^{p-1}-1$. It follows that $f$ has an eigenvalue in $\FF_p^{\times}$. A corresponding eigenvector generates a $G$-invariant subspace of dimension $1$. Therefore, $|V|=p$ (this also follows directly from \cite[8.6.1(b)]{Kurzweil}). 
\end{proof}

Now we are in a position to prove an extension of \cite[Teorema~8(b)]{CorsiTani}, which is related to \cite[Theorem~2.1]{Baer}.
As usual, we denote the Frattini subgroup of $G$ by $\Phi(G)$.

\begin{Thm}\label{thmpgrpiff}
Let $P$ be a $p$-group and $A\le\Aut(P)$. Then the following statements are equivalent:
\begin{enumerate}[(i)]
\item $P$ is $A$-solvable.
\item $P/\Phi(P)$ is $A$-solvable.
\item $A$ is strictly $p$-closed.
\end{enumerate}
\end{Thm}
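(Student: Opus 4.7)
The plan is to prove the equivalence via the cycle (i) $\Rightarrow$ (ii) $\Rightarrow$ (iii) $\Rightarrow$ (i). The first implication is immediate: $\Phi(P)$ is characteristic in $P$, hence $A$-invariant, and quotients of $A$-solvable groups by $A$-invariant normal subgroups are $A$-solvable, as noted after the definition.

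For (ii) $\Rightarrow$ (iii), my strategy is to reinterpret an $A$-invariant cyclic series on $V:=P/\Phi(P)$ as a flag in the $\FF_p$-vector space $V$. Refining so that all factors have order $p$ (subgroups of cyclic groups being characteristic), the series becomes an $A$-invariant complete flag, so the image $\overline{A}$ of $A$ in $\GL(V)\cong\GL_n(\FF_p)$ is contained in the Borel subgroup of upper-triangular matrices. This Borel is itself strictly $p$-closed: its unipotent radical is the unique normal Sylow $p$-subgroup, and the diagonal quotient is abelian of exponent dividing $p-1$. To lift strict $p$-closedness back to $A$, I would invoke the standard consequence of Burnside's basis theorem that the kernel $K$ of $A\to\Aut(V)$ is a $p$-group; the preimage in $A$ of the unipotent radical of $\overline{A}$ is then a normal Sylow $p$-subgroup $B\unlhd A$, and $A/B$ embeds in the diagonal group, proving strict $p$-closedness.

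For (iii) $\Rightarrow$ (i), I would induct on $|P|$. If $P\ne 1$, the characteristic subgroup $\Omega_1(\Z(P))$ is a non-trivial $A$-invariant elementary abelian $p$-group. Choosing a minimal $A$-invariant subgroup $V\le\Omega_1(\Z(P))$, the group $A$ acts irreducibly on $V$, and \autoref{lemschur} forces $|V|=p$. Since $V\le\Z(P)$, it is normal in $P$; applying induction to $P/V$ with the induced action of $A$ (which is still strictly $p$-closed as a quotient) yields an $A$-series of $P/V$ with cyclic factors, which together with $1<V$ gives the desired series for $P$. The main obstacle I foresee is the kernel step in (ii) $\Rightarrow$ (iii), namely justifying that automorphisms of $P$ acting trivially on $P/\Phi(P)$ form a $p$-group; this is classical but should be cited explicitly, since everything else reduces to clean linear algebra on the Frattini quotient combined with \autoref{lemschur}.
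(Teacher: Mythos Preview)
Your proposal is correct and follows essentially the same route as the paper. The paper also proves the cycle (i) $\Rightarrow$ (ii) $\Rightarrow$ (iii) $\Rightarrow$ (i): for (ii) $\Rightarrow$ (iii) it lets $A$ act on the product of the prime-order factors of the refined series in $P/\Phi(P)$, observes that the kernel is a $p$-group by Burnside's theorem (exactly the citation you anticipated, given there as \cite[8.2.9]{Kurzweil}), and that the quotient embeds in $C_{p-1}^n$; for (iii) $\Rightarrow$ (i) it uses $\Omega(\Z(P))$ together with \autoref{lemschur} and induction, just as you do. Your Borel-subgroup phrasing is a cosmetic variant of the same argument.
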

\begin{proof}
Suppose first that $P$ is $A$-solvable.  
Then the normal series $\Phi(P)\le P$ can be refined to an $A$-series $\Phi(P)=P_0\unlhd P_1\unlhd\ldots\unlhd P_n=P$ such that $|P_i/P_{i-1}|=p$ for $i=1,\ldots,n$ using the Jordan--Hölder theorem for operator groups (see \cite[1.8.1]{Kurzweil}). 
Hence, $P/\Phi(P)$ is $A$-solvable. Now given the $A$-series as above, $A$ acts on $\bigtimes_{i=1}^nP_i/P_{i-1}$ fixing each factor. The kernel $B\unlhd A$ of this action is a $p$-group by a theorem of Burnside (see \cite[8.2.9]{Kurzweil}). Since
\[A/B\le\bigtimes_{i=1}^n\Aut(P_i/P_{i-1})\cong C_{p-1}^n\] 
is abelian of exponent dividing $p-1$, we conclude that $A$ is strictly $p$-closed.

Suppose conversely that $A$ is strictly $p$-closed. By standard group theory, 
\[\Omega:=\Omega(\Z(P)):=\{z\in\Z(P):z^p=1\}\] 
is a characteristic non-trivial elementary abelian $p$-subgroup of $P$. By \autoref{lemschur}, there exists an $A$-invariant subgroup $Q\le\Omega$ of order $p$. By induction on $|P|$, $P/Q$ is $A$-solvable and so is $P$.
\end{proof}

Since “most” $p$-groups have no non-trivial $p'$-automorphisms, there is no hope to classify ultrasolvable ($p$-)groups.
Concrete examples are the $p$-groups of maximal nilpotency class and order $\ge p^4$ (see \cite[Hilfssätze~III.14.2, III.14.4]{Huppert}).
The following corollary is not needed in the sequel, but interesting on its own. 

\begin{Cor}
A $2$-group $P$ is ultrasolvable if and only if $\Aut(P)$ is $2$-group.
\end{Cor}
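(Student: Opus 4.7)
The plan is to derive this directly from \autoref{thmpgrpiff} by specializing to $p=2$ and $A=\Aut(P)$. The only substantive observation needed is that for the prime $p=2$, the notion of ``strictly $p$-closed'' collapses to ``$2$-group''. Indeed, if $G$ is strictly $2$-closed with normal Sylow $2$-subgroup $Q$, then $G/Q$ is abelian of exponent dividing $p-1 = 1$, forcing $G/Q=1$, so $G=Q$ is a $2$-group; conversely, any $2$-group is trivially strictly $2$-closed (it is its own Sylow $2$-subgroup).

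With this translation in hand, I would simply invoke \autoref{thmpgrpiff} with $A:=\Aut(P)$: $P$ is ultrasolvable precisely when $P$ is $\Aut(P)$-solvable, which by the equivalence (i)$\Leftrightarrow$(iii) happens iff $\Aut(P)$ is strictly $2$-closed, i.e. iff $\Aut(P)$ is a $2$-group.

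There is no real obstacle to overcome beyond making the specialization explicit. The proof is a two-sentence corollary of the preceding theorem, and the main step is simply pointing out the degeneracy of the strictly $p$-closed condition at $p=2$.
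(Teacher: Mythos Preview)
Your proposal is correct and is exactly the intended argument: the paper states this corollary without proof immediately after \autoref{thmpgrpiff}, and has already observed (right after the definition of strictly $p$-closed) that for $p=2$ the condition degenerates to being a $2$-group. Your two-sentence derivation is precisely what the paper leaves implicit.
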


\begin{Cor}[\textsc{Baer}]\label{corbaer}
Every strictly $p$-closed group is supersolvable.
\end{Cor}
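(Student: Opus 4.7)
The plan is to deduce this from \autoref{thmpgrpiff}, applied to the conjugation action of $G$ on its unique Sylow $p$-subgroup.

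First I would fix a strictly $p$-closed group $G$ with normal Sylow $p$-subgroup $P$, and set $A := G/\C_G(P)$, which embeds into $\Aut(P)$ via conjugation. The point is that $A$ is a quotient of $G$, hence strictly $p$-closed (a fact the paper already notes as a routine exercise). So \autoref{thmpgrpiff} applies to the pair $(P,A)$ and yields that $P$ is $A$-solvable, i.e.\ there is a chain
\[1 = P_0 \unlhd P_1 \unlhd \ldots \unlhd P_n = P\]
of $A$-invariant subgroups with cyclic factors. Since the $G$-action on $P$ factors through $A$, each $P_i$ is normal in $G$, giving a normal series of $G$ from $1$ to $P$ with cyclic factors.

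Next I would extend this series from $P$ to $G$. Since $G/P$ is abelian, it is supersolvable, so it has a chief series with cyclic factors; lifting this series back to $G$ (using that subgroups of $G$ containing $P$ are normal in $G$ iff their images are normal in $G/P$, which is automatic here as $G/P$ is abelian) produces a normal series $P = Q_0 \unlhd Q_1 \unlhd \ldots \unlhd Q_m = G$ with cyclic factors. Concatenating this with the $A$-series of $P$ exhibits $G$ as supersolvable.

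I do not expect a real obstacle: the only thing to check carefully is that the hypothesis of \autoref{thmpgrpiff} is met, which reduces to the already-stated fact that strictly $p$-closed is preserved by quotients. Everything else is bookkeeping about normal series.
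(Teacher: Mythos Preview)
Your argument is correct and is essentially the paper's own proof, just with the details made explicit: the paper simply says that $P$ is $G$-solvable by \autoref{thmpgrpiff} (implicitly passing to the image of $G$ in $\Aut(P)$, which is strictly $p$-closed as a quotient of $G$) and concludes that $G$ is supersolvable since $G/P$ is abelian. Your version spells out the passage to $A=G/\C_G(P)$ and the concatenation of the two normal series, which is exactly what the paper leaves to the reader.
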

\begin{proof}
Let $G$ be strictly $p$-closed with normal Sylow $p$-subgroup $P$. Then $P$ is $G$-solvable by \autoref{thmpgrpiff}. Hence, $G$ is supersolvable.
\end{proof}

We obtain a partial converse of \autoref{corbaer}. 

\begin{Lem}\label{lemstrp1}
Let $G$ be a supersolvable group such that $\pcore_{p'}(G)=1$ for some prime $p$. Then $G$ is strictly $p$-closed.
\end{Lem}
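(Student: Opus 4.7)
The plan is to verify both requirements in the definition of strictly $p$-closed: that $G$ has a normal Sylow $p$-subgroup $P$, and that $G/P$ is abelian of exponent dividing $p-1$.

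For the first requirement I would apply \autoref{lempi} twice. If $p\nmid|G|$ the hypothesis forces $G=\pcore_{p'}(G)=1$ and the statement holds trivially, so assume $p\mid|G|$. Applying \autoref{lempi} with $n=p+1$ produces a normal Hall subgroup of $G$ whose prime divisors all exceed $p$; being a normal $p'$-subgroup, it lies in $\pcore_{p'}(G)=1$, so $p$ is the largest prime dividing $|G|$. A second application with $n=p$ then yields a normal Sylow $p$-subgroup $P$, and Schur--Zassenhaus furnishes a complement $H$, so that $G=P\rtimes H$ with $H$ a $p'$-group.

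For the second requirement I would first establish that $H$ acts faithfully on $V:=P/\Phi(P)$. Since $G$ is solvable and $\pcore_{p'}(G)=1$, the Fitting subgroup satisfies $F(G)=\pcore_p(G)=P$, so $C_G(P)\le P$ and in particular $H\cap C_G(P)=1$, i.e.\ $H$ acts faithfully on $P$. By a theorem of Burnside, the kernel of the action of $\Aut(P)$ on $V$ is a $p$-group, so the $p'$-group $H$ also acts faithfully on $V$. Because $G$ is supersolvable, the series $0<V$ refines to a $G$-chief series $V=V_0>V_1>\cdots>V_n=0$ with all factors of order $p$. The induced action of $G$ on $V_{i-1}/V_i$ yields a homomorphism $G\to\Aut(C_p)\cong C_{p-1}$ with kernel $K_i\unlhd G$, and setting $K:=\bigcap_{i=1}^n K_i$ the quotient $G/K$ embeds diagonally in $\prod_i G/K_i$ and is abelian of exponent dividing $p-1$. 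Finally, $K$ stabilizes a complete flag in $V$ and acts trivially on every factor, so its image in $\Aut(V)\cong\GL_n(\FF_p)$ lies in the unitriangular subgroup and is a $p$-group; as $H$ is a $p'$-group acting faithfully on $V$, this forces $K\cap H=1$. Hence $H\cong HK/K\le G/K$ inherits the exponent bound, and since $G/P\cong H$ the proof is complete.

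The step I expect to pose the main obstacle is the exponent bound on $H$. Supersolvability by itself only guarantees cyclic chief factors of prime order; the extra leverage comes from the faithful action of the $p'$-complement on the Frattini quotient $V$, and the bridge that makes this action faithful is the identification $F(G)=P$. Without the hypothesis $\pcore_{p'}(G)=1$ the Fitting subgroup could carry a nontrivial $p'$-piece centralized by part of $H$, and there would be no way to pin down the exponent of $G/P$.
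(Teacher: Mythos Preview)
Your proof is correct. The paper's argument follows the same course but is much shorter: having established $P\unlhd G$ and that the complement $K$ acts faithfully on $P$, it simply observes that $P$ is $K$-solvable (because $G$ is supersolvable) and invokes \autoref{thmpgrpiff} to conclude that $K\le\Aut(P)$ is strictly $p$-closed; since $K$ is a $p'$-group this means $K$ is abelian of exponent dividing $p-1$, and $G/P\cong K$ finishes the job. What you do instead is reprove the direction (i)$\Rightarrow$(iii) of \autoref{thmpgrpiff} inline: you pass to the Frattini quotient $V$, refine to a flag with factors of order $p$, embed $G/K$ into a product of copies of $C_{p-1}$, and use Burnside's theorem to see that the $p'$-group $H$ meets the stabilizer kernel trivially. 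Your justification of the faithful action via $F(G)=P$ and $\C_G(F(G))\le F(G)$ is a pleasant alternative to the paper's one-line appeal to $\pcore_{p'}(G)=1$ (where the implicit point is that $\C_K(P)$ is normalized by both $P$ and $K$, hence is a normal $p'$-subgroup of $G$). Both routes carry the same mathematical content; the paper's version is tidier only because the flag argument has already been packaged as \autoref{thmpgrpiff}.
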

\begin{proof}
By \autoref{lempi}, $G$ has a normal Sylow $p$-subgroup $P$ (and no other normal Sylow subgroups). By the Schur--Zassenhaus theorem (or Hall's theorem for solvable groups), $P$ has a complement $K$ in $G$. Since $\pcore_{p'}(G)=1$, $K$ acts faithfully on $P$. Moreover, $P$ is a $K$-solvable group, because $G$ is supersolvable. By \autoref{thmpgrpiff}, $K$ is strictly $p$-closed and so must be $G$.
\end{proof}

It is easy to see that the condition $\pcore_{p'}(G)=1$ is not fulfilled by strictly $p$-closed groups in general. In order to obtain an equivalent characterization, we recall the notation $\pcore_{pp'}(G)/\pcore_p(G):=\pcore_{p'}(G/\pcore_p(G))$. 

\begin{Lem}\label{lemstrp2}
For a supersolvable group $G$ the following assertions are equivalent:
\begin{enumerate}[(1)]
\item $G$ is strictly $p$-closed.
\item $\pcore_{pp'}(G)$ is strictly $p$-closed.
\item $\pcore_{pp'}(G)/\pcore_p(G)$ is abelian of exponent dividing $p-1$.
\end{enumerate}
\end{Lem}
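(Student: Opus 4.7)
The plan is to prove the cycle $(1)\Rightarrow(2)\Rightarrow(3)\Rightarrow(1)$. For $(1)\Rightarrow(2)$, I would simply apply the already noted fact that subgroups of strictly $p$-closed groups are strictly $p$-closed to the subgroup $\pcore_{pp'}(G)\le G$. For $(2)\Rightarrow(3)$, set $H:=\pcore_{pp'}(G)$; if $H$ is strictly $p$-closed, its normal (hence unique and characteristic) Sylow $p$-subgroup $Q$ is normal in $G$ as well, so $Q\le\pcore_p(G)$. Conversely $\pcore_p(G)\le H$ is a $p$-subgroup of $H$ and therefore lies in $Q$. Thus $Q=\pcore_p(G)$, and $H/\pcore_p(G)=H/Q$ is abelian of exponent dividing $p-1$ by the definition of strict $p$-closedness.

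The substantive direction is $(3)\Rightarrow(1)$. I would reduce to the case $\pcore_p(G)=1$ by passing to $\bar G:=G/\pcore_p(G)$, where $\pcore_p(\bar G)=1$ and, by hypothesis, $\bar N:=\pcore_{p'}(\bar G)=\pcore_{pp'}(G)/\pcore_p(G)$ is abelian of exponent dividing $p-1$. It suffices to show $\bar G=\bar N$: then $G/\pcore_p(G)=\bar G$ is a $p'$-group (so $\pcore_p(G)$ is a normal Sylow $p$-subgroup of $G$) and abelian of exponent dividing $p-1$, giving (1). Since $\bar G$ is solvable with $\pcore_p(\bar G)=1$, the Fitting subgroup $F(\bar G)$ is a $p'$-group, hence $F(\bar G)\le\bar N$; conversely $\bar N$ is abelian, hence nilpotent, so $\bar N\le F(\bar G)$. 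Thus $\bar N=F(\bar G)$, and Fitting's theorem $C_{\bar G}(F(\bar G))\le F(\bar G)$ together with the abelianness of $\bar N$ yields the self-centralizing equality $\bar N=C_{\bar G}(\bar N)$.

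The conclusion comes from applying \autoref{lempi} to the supersolvable group $\bar G$ with $\pi$ the set of primes $\ge p$: this provides a normal Hall $\pi$-subgroup $\bar H\unlhd\bar G$. Every prime dividing $|\bar N|$ divides $p-1$ and is therefore strictly less than $p$, so $\bar H$ and $\bar N$ have coprime orders. Since both are normal, $[\bar H,\bar N]\le\bar H\cap\bar N=1$, forcing $\bar H\le C_{\bar G}(\bar N)=\bar N$ and hence $\bar H=1$. Thus $|\bar G|$ has no prime factor $\ge p$, so $\bar G$ is a $p'$-group and equals $\pcore_{p'}(\bar G)=\bar N$, as required. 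The one real insight—and the only step beyond routine bookkeeping—is recognizing that the exponent-$(p-1)$ hypothesis in (3) confines $\bar N$ entirely to primes below $p$, which is precisely what allows the supersolvable Sylow tower from \autoref{lempi} to annihilate everything above; I expect no serious obstacle beyond that observation.
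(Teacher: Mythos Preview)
Your proof is correct. For $(3)\Rightarrow(1)$, however, the paper takes a shorter path that avoids both the quotient reduction and the Fitting subgroup. Assuming $p\mid|G|$, it applies \autoref{lempi} just once, to the \emph{largest} prime divisor $q$ of $|G|$, obtaining a normal Sylow $q$-subgroup $Q\unlhd G$. Then $Q\le\pcore_{pp'}(G)$, and since $q\ge p$ while $\pcore_{pp'}(G)/\pcore_p(G)$ has exponent dividing $p-1$, the image of $Q$ in this quotient is trivial; this forces $q=p$. Thus $\pcore_p(G)=Q$ is already a full Sylow $p$-subgroup, $G/Q$ is a $p'$-group, and $G=\pcore_{pp'}(G)$, finishing the argument in two lines. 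Your route through $F(\bar G)=\bar N$ and the self-centralizing property $C_{\bar G}(F(\bar G))\le F(\bar G)$ is perfectly valid and isolates the same ``all primes below $p$'' phenomenon, but it imports heavier structural facts than the situation requires; the paper's version shows that a single application of the Sylow tower at the top prime suffices.
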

\begin{proof}
It is clear that (1) implies (2) and (2) implies (3). Now assume (3). If $p$ does not divide $|G|$, then $G=\pcore_{p'}(G)=\pcore_{pp'}(G)$ is strictly $p$-closed by hypothesis. Hence, we may assume that $p$ divides $|G|$.
By \autoref{lempi}, $G$ has a normal Sylow $q$-subgroup $Q$, where $q$ is the largest prime divisor of $|G|$. Moreover, $Q\le\pcore_{pp'}(G)$ and therefore $q=p$. Since $G/\pcore_p(G)=G/Q$ is a $p'$-group, it follows again that $G=\pcore_{pp'}(G)$ is strictly $p$-closed. 
\end{proof}

\begin{Lem}[\textsc{Durbin--McDonald}]\label{lemabel}
Let $P=C_{p^{a_1}}\times\ldots\times C_{p^{a_n}}$ be an abelian $p$-group with $1\le a_1\le\ldots\le a_n$. Then $\Aut(P)$ is supersolvable if and only if $a_1<\ldots<a_n$ or $P=C_2\times C_2$.
\end{Lem}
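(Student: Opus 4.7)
The plan is to apply \autoref{thmpgrpiff} with $A=\Aut(P)$: this reduces ultrasolvability of $P$ to the existence of an $\Aut(P)$-invariant complete flag on $P/\Phi(P)\cong\FF_p^n$, which via \autoref{lemcharseries} then gives supersolvability of $\Aut(P)$. The central tool is a matrix description: grouping cyclic factors by exponent, $P=\bigtimes_{j=1}^{s}C_{p^{b_j}}^{m_j}$ with $b_1<\ldots<b_s$, a direct calculation (based on when $e_i\mapsto\sum_j m_{ji}e_j$ extends to an endomorphism, forcing $p^{\max(a_j-a_i,0)}\mid m_{ji}$) shows that the image of $\Aut(P)$ in $\GL_n(\FF_p)$ consists of all invertible block-upper-triangular matrices with diagonal blocks in $\GL_{m_j}(\FF_p)$ (block pattern $(m_1,\ldots,m_s)$). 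In particular, each $\GL_{m_j}(\FF_p)$ is a quotient of $\Aut(P)$ via projection to the corresponding diagonal block.

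The "if" direction then follows: strictly increasing exponents mean all $m_j=1$, so the image lies in the upper-triangular subgroup of $\GL_n(\FF_p)$ and fixes the standard flag of $P/\Phi(P)$; by \autoref{thmpgrpiff} and \autoref{lemcharseries}, $\Aut(P)$ is supersolvable. The case $P=C_2\times C_2$ is handled by $\Aut(P)\cong S_3$.

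For the "only if" direction, supersolvability of $\Aut(P)$ forces each $\GL_{m_j}(\FF_p)$ to be supersolvable. Classical facts (nonsupersolvability of $\SL_2(\FF_3)$ via the quotient $A_4$, simplicity of $\PSL_2(\FF_p)$ for $p\ge 5$, and simplicity of $\GL_3(\FF_2)$ of order $168$) force $m_j\in\{1,2\}$ with $m_j=2\Rightarrow p=2$. One next observes that $m_j=2$ with $b_j\ge 2$ is excluded, since then $\GL_2(\ZZ/2^{b_j})\le\Aut(P)$ surjects onto $\GL_2(\ZZ/4)$, which is not supersolvable: the normal subgroup $\ker(\GL_2(\ZZ/4)\to\GL_2(\FF_2))$ identifies with $M_2(\FF_2)$ under the conjugation action of $S_3\cong\GL_2(\FF_2)$, and the composition factor $\mathfrak{sl}_2(\FF_2)/\FF_2\cdot I$ is irreducible of $\FF_2$-dimension~$2$, obstructing any cyclic normal refinement. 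Hence only $b_j=1$ is allowed, leaving $P=V\times Q$ with $V=C_2\times C_2$; if $Q\ne 1$, choosing any nontrivial $\phi\colon Q\to V$, the automorphism $(v,q)\mapsto(v+\phi(q),q)$ together with the natural $\Aut(V)\hookrightarrow\Aut(P)$ generates a subgroup containing $V\rtimes\GL_2(\FF_2)\cong S_4$, again not supersolvable. Thus $Q=1$, i.e., $P=C_2\times C_2$. The main obstacle is verifying non-supersolvability of $\GL_2(\ZZ/4)$ via the $S_3$-module structure of $M_2(\FF_2)$.
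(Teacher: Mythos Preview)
Your proof is correct and follows the same overall strategy as the paper—both reduce via \autoref{thmpgrpiff} to the action of $\Aut(P)$ on $P/\Phi(P)\cong\FF_p^{\,n}$—but the executions diverge in the ``only if'' direction. The paper passes to the \emph{subgroup} $\Aut(C_{p^{a_k}}\times C_{p^{a_k}})\le\Aut(P)$ and shows it is not supersolvable: for $p\ge 3$ via the quotient $\GL(2,p)$, and for $p=2$, $a\ge 2$ by writing down two explicit order-$3$ automorphisms of $C_{2^a}\times C_{2^a}$ generating distinct Sylow $3$-subgroups, which is quicker than your module-theoretic analysis of $\ker\bigl(\GL_2(\ZZ/4)\to\GL_2(\FF_2)\bigr)$. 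Conversely, your block-matrix description of the image of $\Aut(P)$ in $\GL_n(\FF_p)$ makes the structure transparent, and—more importantly—your separate treatment of $P=V\times Q$ with $Q\ne 1$ (producing a copy of $V\rtimes\GL_2(\FF_2)\cong S_4$ inside $\Aut(P)$) covers a case the paper's reduction does not actually settle: when $p=2$ and the only repeated exponent is $a_k=a_{k+1}=1$ (e.g.\ $P=C_2\times C_2\times C_4$), the paper's chosen subgroup $\Aut(C_2\times C_2)\cong S_3$ \emph{is} supersolvable, so no contradiction follows from that reduction alone. Your argument handles this residual case cleanly.
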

\begin{proof}
If $P\cong C_2\times C_2$, then $\Aut(P)\cong\GL(2,2)\cong S_3$ is supersolvable. 
Suppose next that $a_1<\ldots<a_n$. Let $A:=\Aut(P)$. By \autoref{thmpgrpiff} and \autoref{corbaer}, it suffices to show that $P/\Phi(P)\cong C_p^n$ is $A$-solvable. 
Define characteristic subgroups 
\[P_i:=\{x\in P:x^{p^{a_i}}=1\}\Phi(P)\le P\] 
for $i=1,\ldots,n$. Then $P_0:=\Phi(P)<P_1<\ldots<P_n=P$ and $|P_i:P_{i-1}|=p$ for $i=1,\ldots,n$. 
So we are done.

Conversely, let $a_k=a_{k+1}$ for some $k$. Since $\Aut(C_p^{a_k}\times C_p^{a_k})$ is a subgroup of $A$, we may assume that $P\cong C_{p^a}\times C_{p^a}$ in order to show that $A$ is not supersolvable. Let $P=\langle x,y\rangle$ and $x',y'\in P$ such that $\{x'\Phi(P),y'\Phi(P)\}$ is a basis of the elementary abelian group $P/\Phi(P)\cong C_p\times C_p$. Recall that Burnside's basis theorem implies $P=\langle x',y'\rangle$. It is easy to check that there exists an automorphism $\gamma\in A$ such that $\gamma(x)=x'$ and $\gamma(y)=y'$. This shows that the restriction map 
\[\Gamma:A\to\Aut(P/\Phi(P))\cong\GL(2,p)\] 
is surjective. If $p\ge 5$, then $\GL(2,p)$ (and $A$ in turn) is not even solvable. For $p=3$, $\GL(2,3)\cong Q_8\rtimes C_3$ is not supersolvable by \autoref{lempi}, for instance. 
Finally, let $p=2$ and $a\ge 2$.
By Burnside's theorem mentioned before, the kernel of $\Gamma$ is a $2$-group. Thus, $|A|=2^s3$ for some $s\ge 1$. It is easy to check that the maps $\alpha,\beta:P\to P$ with
\begin{align*}
\alpha(x)&=y,&\alpha(y)&=(xy)^{-1},\\
\beta(x)&=y^{-1},& \beta(y)&=xy^{-1}
\end{align*}
are automorphisms of order $3$ and $\langle\alpha\rangle\ne\langle\beta\rangle$. In particular, $A$ does not have a normal Sylow $3$-subgroup. By \autoref{lempi}, $A$ is not supersolvable.
\end{proof}

We end this section by proving the converse of \autoref{lemcharseries} for $p$-groups.

\begin{Thm}[\textsc{Corsi Tani}]\label{thmCorsi}
Let $P\not\cong C_2\times C_2$ be a $p$-group such that $\Aut(P)$ supersolvable. Then $P$ is ultrasolvable.
\end{Thm}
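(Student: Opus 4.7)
The plan is to split into two cases depending on whether $P$ is abelian, in each case reducing to the machinery already assembled.

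If $P$ is non-abelian, the argument is nearly a one-liner. By \autoref{lempcore}, $\pcore_{p'}(\Aut(P))=1$, and applying \autoref{lemstrp1} to the supersolvable group $\Aut(P)$ then shows that $\Aut(P)$ is strictly $p$-closed. \autoref{thmpgrpiff} (iii)$\Rightarrow$(i) yields that $P$ is $\Aut(P)$-solvable, i.e., ultrasolvable.

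If $P$ is abelian, \autoref{lempcore} is unavailable since $\pcore_{p'}(\Aut(P))$ is typically non-trivial, so I would invoke the explicit description in \autoref{lemabel}. Since $\Aut(P)$ is supersolvable and $P\not\cong C_2\times C_2$, that lemma forces $P\cong C_{p^{a_1}}\times\cdots\times C_{p^{a_n}}$ with strictly increasing exponents $a_1<\cdots<a_n$. The proof of \autoref{lemabel} already exhibits characteristic subgroups $\Phi(P)=P_0<P_1<\cdots<P_n=P$ with $|P_i:P_{i-1}|=p$, giving a characteristic cyclic series from $\Phi(P)$ up to $P$. It remains to extend this series downward through $\Phi(P)$. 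I would argue by induction on $|P|$: the Frattini subgroup $\Phi(P)\cong C_{p^{a_1-1}}\times\cdots\times C_{p^{a_n-1}}$ (after discarding any trivial factors) again has strictly increasing positive exponents and is not $C_2\times C_2$, so by \autoref{lemabel} its own automorphism group is supersolvable and the inductive hypothesis applies; the base cases $P=1$ and $P=C_p$ are trivial. Concatenating the series furnished by induction on $\Phi(P)$ with the $P_i$ chain produces the desired characteristic cyclic series for $P$.

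The step that makes the induction legitimate is the observation that any subgroup characteristic in $\Phi(P)$ is automatically $\Aut(P)$-invariant, since every $\alpha\in\Aut(P)$ restricts to an element of $\Aut(\Phi(P))$; hence characteristic-in-$\Phi(P)$ implies characteristic-in-$P$. I expect the abelian case to be the main (if modest) obstacle, as the non-abelian case is essentially immediate from the earlier lemmas.
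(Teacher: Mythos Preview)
Your non-abelian case is identical to the paper's. In the abelian case your argument is correct, but you take a longer road than necessary: once you have observed (via the subgroups $P_i$ from the proof of \autoref{lemabel}) that $P/\Phi(P)$ is $\Aut(P)$-solvable, the implication (ii)$\Rightarrow$(i) of \autoref{thmpgrpiff} immediately gives that $P$ itself is $\Aut(P)$-solvable---this is exactly what the paper does. Your induction through $\Phi(P)$ works (the check that $\Phi(P)\not\cong C_2\times C_2$ is sound, since strictly increasing exponents prevent two factors of the same order), but it re-proves by hand the descent that \autoref{thmpgrpiff} already packages; in effect you use \autoref{thmpgrpiff} in the non-abelian case and then forgo it in the abelian one.
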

\begin{proof}
Suppose first that $P$ is abelian. Then $P\cong C_{p^{a_1}}\times\ldots\times C_{p^{a_n}}$ with $a_1<\ldots<a_n$ by \autoref{lemabel}. In fact, we have shown in the proof of \autoref{lemabel} that $P/\Phi(P)$ is $A$-solvable, where $A:=\Aut(P)$. Hence, $P$ is ultrasolvable by \autoref{thmpgrpiff}.
Now assume that $P$ is non-abelian.
By \autoref{thmpgrpiff}, it suffices to show that $\Aut(P)$ is strictly $p$-closed. But this follows from \autoref{lempcore} and \autoref{lemstrp1}.
\end{proof}

It is straight-forward to deduce a characterization of supersolvable nilpotent groups from \autoref{thmCorsi}, but this will be generalized by our main theorem.

\section{Proof of \autoref{main}}

Let $V=C_2\times C_2$ be the Klein four-group.
If $G$ is ultrasolvable, then $\Aut(G)$ is supersolvable by \autoref{lemcharseries}.
If $H$ is ultrasolvable of odd order, then $\Aut(H\times V)=\Aut(H)\times S_3$ is supersolvable. 
Now assume conversely that $A:=\Aut(G)$ is supersolvable. Then $G/\Z(G)\cong\Inn(G)\le A$ is supersolvable and so is $G$.

\textbf{Case~1:} $G=H\times V$ for some $H\le G$.\\
Suppose first that $|H|$ is even. By \autoref{lempi}, $H$ is $2$-nilpotent. In particular, $\gcd(|H/H'|,|V|)\ne 1$.
As in the proof of \autoref{lemchardir}, we construct an automorphism $\alpha:G\to G$, $(h,v)\mapsto(h,\phi(h)v)$, where $\phi:H\to V$ is non-trivial. Since $V$ has exponent $2$, $\alpha$ is an involution. Let $\beta\in\Aut(V)$ be of order $3$. We extend $\beta$ to $G$ by $\beta(h)=h$ for all $h\in H$. Then $\beta\in\pcore_{2'}(A)$ and $\gamma:=[\alpha,\beta]=\alpha\beta^{-1}\alpha\beta\in \pcore_{2'}(A)$ by \autoref{lempi}. 
We compute
\[\gamma(h,v)=(\alpha\beta^{-1}\alpha)(h,\beta(v))=(\alpha\beta^{-1})(h,\phi(h)\beta(v))=\alpha(h,\beta^{-1}(\phi(h))v)=(h,\phi(h)\beta^{-1}(\phi(h))v)\]
for all $(h,v)\in G$. By construction, there exists $h\in H$ such that $w:=\phi(h)\ne 1$. Then $\beta^{-1}(w)\ne w$ and $w\beta^{-1}(w)\ne 1$. This shows that $\gamma$ has order $2$, which contradicts $\gamma\in\pcore_{2'}(A)$.
Therefore, $|H|$ is odd and the claim follows by induction on $|G|$, because $\Aut(H)\le A$ is supersolvable.

\textbf{Case~2:} $V$ is not a direct factor of $G$.\\
Let $p$ be a prime divisor of $|G|$. Let $\pi$ be the set of primes $q>p$. Then $N:=\pcore_\pi(G)$ is a normal Hall $\pi$-subgroup by \autoref{lempi}. For a Sylow $p$-subgroup $P$ of $G$ we further have that $NP=\pcore_{\pi\cup\{p\}}(G)$ is characteristic in $G$. Arguing by induction on $|G|$ (starting with the largest prime divisor $p$), it suffices to show that $\overline{P}:=PN/N\cong P$ is $A$-solvable. Equivalently, by \autoref{thmpgrpiff}, we need that the image $\overline{A}$ of $A$ in $\Aut(\overline{P})$ is strictly $p$-closed. We prove this via \autoref{lemstrp2}.
Let $Z:=P\cap\Z(G)\le\Z(P)$.
Since $A$ is supersolvable, 
\[\overline{P}/\overline{Z}\cong P\Z(G)/\Z(G)\le G/\Z(G)\cong\Inn(G)\] 
is $A$-solvable. 
Let $\overline{A}_p:=\pcore_p(\overline{A})$ and $\overline{B}:=\pcore_{pp'}(\overline{A})=\overline{A}_p\rtimes\overline{K}$ for some complement $\overline{K}$. We need to show that $\overline{K}$ is abelian of exponent dividing $p-1$.
We define the auxiliary group 
\[\widehat{B}:=\overline{P}\rtimes\overline{B}=(\overline{P}\rtimes\overline{A}_p)\rtimes\overline{K}.\] 
Since $\overline{B}$ is supersolvable and $\overline{P}/\overline{Z}$ is $\overline{B}$-solvable, there exists a $\widehat{B}$-series from $\overline{Z}$ to $\overline{P}$ to $\overline{P}\rtimes\overline{A}_p$ in $\widehat{B}$ with factors of order $p$. Let $\overline{C}\unlhd\overline{K}$ be the kernel of the action of $\overline{K}$ on the direct factors of this series. Then $\overline{K}/\overline{C}$ is abelian of exponent dividing $p-1$. Thus, it suffices to prove that $\overline{C}=1$. 
Note that $\overline{C}\Z(\overline{A}_p)=\C_{\overline{B}}(\overline{A}_p)\unlhd\overline{A}$ (using \cite[8.2.2]{Kurzweil}). In particular, $\overline{C}\unlhd\overline{A}$. 

By construction, we have
$[\overline{P},\overline{C}]\le\overline{Z}$.  
As in the proof of \autoref{lemchardir}, it follows that
\[\overline{P}=[\overline{P},\overline{C}]\times\C_{\overline{P}}(\overline{C}).\]
The preimage $P_0\le Z$ of $[\overline{P},\overline{C}]$ is a direct factor of $P$. By Gaschütz' theorem, $P_0$ has a complement $K$ in $G$ (see \cite[3.3.2]{Kurzweil}). Since $P_0\le\Z(G)$, we must have $G=P_0\times K$.
Note that $Z$ is $A$-invariant as the unique Sylow $p$-subgroup of $\Z(G)$. Moreover, $[\overline{P},\overline{C}]$ is $\overline{A}$-invariant since $\overline{C}\unlhd\overline{A}$. It follows that $\alpha(P_0)\subseteq NP_0$ for $\alpha\in A$.
Hence, $\alpha(P_0)\subseteq Z\cap NP_0=(Z\cap N)P_0=P_0$, i.\,e. $P_0$ is $A$-invariant. 

If $P_0=1$, then $\overline{C}$ acts trivially on $\overline{P}$ and therefore $\overline{C}=1$. In this case we are done by \autoref{lemstrp2}
Next, let $P_0\ne 1$. 
Since $\Aut(P_0)\le A$ is supersolvable and $P_0\not\cong V$, it follows from \autoref{thmCorsi} that $P_0$ is ultrasolvable. Since $P_0$ is $A$-invariant, it suffices to show that $G/P_0\cong K$ is ultrasolvable. Every direct factor of $K$ is also a direct factor of $G$. So by hypothesis, $V$ is not a direct factor of $K$. Since $\Aut(K)\le A$ is supersolvable, $K$ is ultrasolvable by induction. This completes the proof. 

\section{Fully solvable groups}

We close this paper with an open problem. Recall that a subgroup $H\le G$ is called \emph{fully invariant} if $\alpha(H)\le H$ for every endomorphism $\alpha:G\to G$. Obviously, fully invariant subgroups are characteristic. It is natural to call a group $G$ \emph{fully solvable} if there exists a series of fully invariant subgroups
\[1=G_0\unlhd\ldots\unlhd G_n=G\]
such that the factors $G_i/G_{i-1}$ are cyclic for $i=1,\ldots,n$. Fully solvable groups are certainly ultrasolvable. For abelian groups the converse is also true, because the characteristic subgroups constructed in the proof of \autoref{lemabel} are fully invariant (noticing that $\Phi(P)=\langle x^p:x\in P\rangle$ for every abelian $p$-group $P$).

On the other hand, the ultrasolvable dihedral group $D_8$ is not fully solvable, since none of the three maximal subgroups is fully invariant. Using the computer algebra system GAP~\cite{GAPnew}, one can show that there are $36$ ultrasolvable groups and $22$ fully solvable groups of order $32$. 

\begin{Pro}
Find a “convenient” characterization of fully solvable groups.
\end{Pro}

\section*{Acknowledgment}
I thank an anonymous referee for pointing out a wrong lemma in a previous version of this paper.

\end{document}